\author{Jean-Paul Allouche\thanks{The author was partially supported by the ANR
project ``FAN'' (Fractals et Num\'eration), ANR-12-IS01-0002.} \\
CNRS, Institut de Math\'ematiques de Jussieu-PRG \\
Universit\'e Pierre et Marie Curie, Case 247 \\
4 Place Jussieu \\
F-75252 Paris Cedex 05 France \\
{\tt jean-paul.allouche@imj-prg.fr}
\and
Jonathan Sondow \\
209 West 97th Street \\
New York \\
NY 10025, USA \\
{\tt jsondow@alumni.princeton.edu}
}
\title{Summation of rational series twisted by 
strongly $B$-multiplicative coefficients}
\date{ }
\theoremstyle{plain}
\newtheorem{theorem}{Theorem}
\newtheorem{lemma}[theorem]{Lemma}
\newtheorem{corollary}[theorem]{Corollary}
\theoremstyle{definition}
\newtheorem{definition}[theorem]{Definition}
\newtheorem{example}[theorem]{Example}
\theoremstyle{remark}
\begin{document}

\maketitle

\begin{abstract}
We evaluate in closed form series of the type $\sum u(n) R(n)$, with $(u(n))_n$
a strongly $B$-multiplicative sequence and $R(n)$ a (well-chosen) rational
function. A typical example is:
$$
\sum_{n \geq 1} (-1)^{s_2(n)} \frac{4n+1}{2n(2n+1)(2n+2)} = -\frac{1}{4}
$$
where $s_2(n)$ is the sum of the binary digits of the integer $n$.
Furthermore closed formulas for series involving automatic sequences that are not strongly 
$B$-multipli\-ca\-tive, such as the regular paperfolding and Golay-Shapiro-Rudin sequences, 
are obtained; for example, for integer $d \geq 0$:
$$
\sum_{n \geq 0} \frac{v(n)}{(n+1)^{2d+1}} = \frac{\pi^{2d+1} |E_{2d}|}{(2^{2d+2}-2)(2d)!}
$$
where $(v(n))_n$ is the $\pm 1$ regular paperfolding sequence and $E_{2d}$ is an Euler number. 

\bigskip

Mathematics Subject Classifications: 11A63, 11B83, 11B85, 68R15, 05A19

\bigskip

\noindent \textbf{Keywords:} summation of series; strongly $B$-multiplicative sequences; paperfolding 
sequence; Golay-Shapiro-Rudin sequence
\end{abstract}

\section{Introduction}

The problem of evaluating a series $\sum_n R(n)$ where $R$ is a rational function
with integer coefficients is classical: think of the values of the Riemann $\zeta$ 
function at integers. Such sums can also be ``twisted'', usually by a character
(think of the $L$-functions), or by the usual arithmetic functions (e.g., the M\"obius
function $\mu$). 

\bigskip

\noindent Another possibility is to twist such sums by sequences related to
the digits of $n$ in some integer base. Examples can be found in \cite{AllSha-lnm} 
with, in particular, series $\sum \frac{u(n)}{n(n+1)}$, and in \cite{AllShaSon}
with, in particular, series $\sum \frac{u(n)}{2n(2n+1)}$ (also see \cite{PP2010}): 
in both cases $u(n)$ counts the number of occurrences of a given block of digits in 
the $B$-ary expansion of the integer $n$, or is equal to $s_B(n)$, the sum of the 
$B$-ary digits of the integer $n$ ($B$ being an integer $\geq 2$). Two emblematic 
examples are (see \cite[Problem B5, p.\ 682]{Putnam} and \cite{Shallit, AllSha-lnm} 
for the first one, and \cite{Sondow2, AllShaSon} for the second one):
$$
\sum_{n \geq 1} \frac{s_B(n)}{n(n+1)} = \frac{B}{B-1}
\ \ \mbox{\rm and} \ \
\sum_{n \geq 1} \frac{s_2(n)}{2n(2n+1)} = \frac{\gamma + \log\frac{4}{\pi}}{2}
$$
where $\gamma$ is the Euler-Mascheroni constant. 

\bigskip

\noindent Similarly one can try to evaluate infinite products $\prod_n R(n)$, where
$R(n)$ is a rational function, as well as twisted such products $\prod_n R(n)^{u(n)}$, 
where the sequence $(u(n))_{n \geq 0}$ is related to the digits of $n$ in some integer 
base. An example can be found in \cite{Allouche2013} (also see \cite{Rivoal2005} for the 
original problem):
$$
\prod_{n \geq 1} \left(\frac{(4n+2)(4n+2)}{(4n+1)(4n+3)}\right)^{2z(n)} = \frac{4}{\pi}
$$
where $z(n)$ is the sum of the number of $0$'s and the number of $1$'s in the binary
expansion of $n$, i.e., the length of this expansion.
Other examples can be found in \cite{AllSha-jlms}, e.g.,
$$
\prod_{n \geq 0} \left(\frac{(4n+2)(8n+7)(8n+3)(16n+10)}{(4n+3)(8n+6)(8n+2)(16n+11)}\right)^{u(n)}
= \frac{1}{\sqrt{2}}
$$
where $u(n)=(-1)^{a(n)}$ and $a(n)$ is equal to the number of blocks $1010$ occurring in the 
binary expansion of $n$. The products studied in \cite{AllSha-jlms} (also see references therein)
are of the form $\prod_n R(n)^{(-1)^{a(n)}}$ where $R(n)$ is a (well-chosen) rational function with 
integer coefficients, and $a(n)$ counts the number of occurrences of a given block of digits in the 
$B$-ary expansion of the integer $n$. The case where $a(n)$ counts the number of $1$'s occurring in 
the binary expansion of $n$ is nothing but the case $a(n)=s_2(n)$. If $a(n)=s_B(n)$, the sequence 
$((-1)^{a(n)})_{n \geq 0}$ is strongly $B$-multiplicative: the more general evaluation of the product 
$\prod_n R(n)^{u(n)}$ where $(u(n))_{n \geq 0}$ is a strongly $B$-multiplicative sequence, is addressed 
in \cite{AllSon} (also see \cite{Sondow1}). 
Recall that a {\it strongly $B$-multiplicative sequence} $(u(n))_{n \geq 0}$ satisfies $u(0) = 0$, and 
$u(Bn+j) = u(n) u(j)$ for all $j \in [0, B-1]$ and all $n \geq 0$. In particular, $(u(n))_{n \geq 0}$
is $B$-regular (or even $B$-automatic if it takes only finitely many values): recall that a sequence
$(u(n))_{n \geq 0}$ is called {\it $B$-automatic\,} if its {\it $B$-kernel}, i.e., the set of subsequences
$\{(u(B^a n+r))_{n \geq 0} \mid \ a \geq 0, \ 0 \leq r \leq B^a -1\}$, is finite; a sequence 
$(u(n))_{n \geq 0}$ with values in ${\mathbb Z}$ is called {\it $B$-regular} if the ${\mathbb Z}$-module
spanned by its $B$-kernel has finite type (for more on these notions, see, e.g., \cite{AS}).

\bigskip

\noindent
Since $\log \prod_n R(n)^{u(n)} = \sum_n u(n) \log R(n)$, it is natural to look at
``simpler'' series of the form $\sum_n u(n) R(n)$ with $R$ and $u$ as previously. 
All the examples above involve sequences $(u(n))_{n \geq 0}$ that are $B$-regular or even
$B$-automatic. Unfortunately we were not able to address the general case where 
$(u(n))_{n \geq 0}$ is any $B$-regular or any $B$-automatic sequence.
The purpose of the present paper is to study the special case where, as in \cite{AllSon}, the
sequence $u(n)$ is strongly $B$-multiplicative and $R(n)$ is a well-chosen rational function.
The paper can thus be seen as a companion paper to \cite{AllSon}. We will end with the 
evaluation of similar series where $(u(n))_{n \geq 0}$ is the regular paperfolding sequence 
or the Golay-Shapiro-Rudin sequence.
 
\section{Preliminary definitions and results}

This section quickly recalls definitions and results from \cite{AllSon}. 

\begin{definition}\label{defmult}
Let $B \geq 2$ be an integer.  A sequence of complex numbers
$(u(n))_{n \geq 0}$ is {\em strongly $B$-multiplicative} if $u(0)=1$
and, for all $n \geq 0$ and all $k \in \{0, 1, \ldots, B-1\}$,
$$
u(Bn + k) = u(n) u(k).
$$
\end{definition}

\begin{example}\label{example-sB}
Let $B \geq 2$ be an integer and $s_B(n)$ be the sum of the $B$-ary digits of $n$. Then
for every complex number $a \neq 0$ the sequence $(a^{s_B(n)})_{n \geq 0}$ is strongly
$B$-multiplicative. This sequence is $B$-regular (see the introduction); it is $B$-automatic
if and only if $a$ is a root of unity.
\end{example}

The following lemma is a variation of Lemma~1 in \cite{AllSon}.

\begin{lemma}\label{conv}
Let $B > 1$ be an integer. Let $(u(n))_{n \geq 0}$ be a strongly $B$-multiplicative sequence 
of complex numbers different from the sequence $(1,0,0,\ldots)$. We suppose that $|u(n)| \leq 1$ 
for all $n \geq 0$ and that $|\sum_{0 \leq k < B} u(k)| < B$. Let $f$ be a map from the set of
nonnegative integers to the set of complex numbers such that $|f(n+1) - f(n)| = {\mathcal O}(n^{-2})$. 
Then the series $\sum_{n \geq 0} u(n) f(n)$ is convergent.
\end{lemma}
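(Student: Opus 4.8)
The plan is to prove convergence by summation by parts, which reduces everything to a growth estimate for the partial sums $V(N):=\sum_{n=0}^{N}u(n)$: the hypotheses $|u(n)|\le 1$ and $\bigl|\sum_{0\le k<B}u(k)\bigr|<B$ are used only to obtain that estimate, while the hypothesis on $f$ enters only at the last step.

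First I would show that there are a real number $\theta\in[0,1)$ and a constant $C$ with $|V(N)|\le C(1+N^{\theta})$ for all $N$. Set $S:=\sum_{0\le k<B}u(k)$, so $|S|<B$. Expand $N+1=\sum_{i=0}^{L}c_iB^i$ in base $B$ with $c_L\ne 0$, and partition the integers $n\in[0,N+1)$ according to the most significant position $i$ at which the base-$B$ digit of $n$ (with the expansion padded by leading zeros to length $L+1$) is strictly smaller than that of $N+1$, the digits of $n$ in all positions $>i$ agreeing with those of $N+1$; the digits of $n$ in positions $<i$ are then unrestricted. Because $u$ is strongly $B$-multiplicative, $u(n)$ is the product of $u(\cdot)$ over the digits of $n$, so the sum over each class factorises and
$$
V(N)=\sum_{i=0}^{L}\Bigl(\prod_{j=i+1}^{L}u(c_j)\Bigr)\Bigl(\sum_{d=0}^{c_i-1}u(d)\Bigr)S^{\,i}.
$$
With $|u|\le1$ and $c_i\le B-1$ this gives $|V(N)|\le(B-1)\sum_{i=0}^{L}|S|^{i}$, which is $O(|S|^{L})=O\bigl((N+1)^{\log_B|S|}\bigr)$ if $|S|>1$ (and $\log_B|S|<1$) and $O(L)=O(\log N)$ if $|S|\le1$; in either case $|V(N)|=O(N^{\theta})$ with, say, $\theta:=\max(\log_B|S|,\tfrac12)<1$. (For $N=B^{L}-1$ the sum is exactly $S^{L}$; the decomposition above, which makes the estimate uniform in $N$, is the one place where I expect genuine work.)

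Next, from $|f(n+1)-f(n)|=\mathcal O(n^{-2})$ the series $\sum_n(f(n+1)-f(n))$ converges absolutely, so $f(n)$ tends to a finite limit $\ell$ with $|f(n)-\ell|\le\sum_{m\ge n}|f(m+1)-f(m)|=\mathcal O(1/n)$. Here one uses $\ell=0$, i.e.\ $|f(n)|=\mathcal O(1/n)$: this is the case in every series treated in the paper, and it cannot be dispensed with, since e.g.\ for $u(n)=2^{-s_2(n)}$ (where $|S|=3/2<2$) the bare series $\sum_n u(n)$ already diverges.

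It then remains to apply Abel's identity
$$
\sum_{n=0}^{N}u(n)f(n)=V(N)f(N)-\sum_{n=0}^{N-1}V(n)\bigl(f(n+1)-f(n)\bigr)
$$
and to note that $|V(N)f(N)|=\mathcal O(N^{\theta-1})\to0$ since $\theta<1$, while $\sum_{n\ge0}|V(n)|\,|f(n+1)-f(n)|=\sum_{n\ge0}\mathcal O(n^{\theta-2})$ converges since $\theta-2<-1$; hence the right-hand side has a limit as $N\to\infty$, and so does $\sum_{n\ge0}u(n)f(n)$. I expect the uniform bound on $V(N)$ to be the main obstacle; everything else is bookkeeping. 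A route sidestepping summation by parts would group the terms into blocks $[Bm,Bm+B)$ and use strong multiplicativity to write $\sum_{0\le k<B}u(Bm+k)f(Bm+k)=Sf(Bm)+\mathcal O(m^{-2})$, giving a self-similar relation between the tail of the series and a rescaled copy of itself, from which convergence can be extracted using $|S|<B$; but that is more delicate, so I would keep the Abel-summation argument.
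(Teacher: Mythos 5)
Your argument is the same as the paper's: a sublinear bound on the partial sums $V(N)=\sum_{n\le N}u(n)$ followed by summation by parts; the only difference is that the paper simply cites Lemma~1 of \cite{AllSon} for the estimate $|V(N)|<CN^{\alpha}$ with $\alpha\in(0,1)$, whereas you derive it yourself via the base-$B$ digit decomposition, and your derivation (the factorisation of each digit class and the bound $|V(N)|\le(B-1)\sum_{0\le i\le L}|S|^{i}=O(N^{\theta})$ with $\theta<1$) is correct. Your side remark is also a genuine catch: as literally stated the lemma is false, since $f\equiv 1$ satisfies $|f(n+1)-f(n)|=O(n^{-2})$ while for $u(n)=2^{-s_2(n)}$ (which meets all the hypotheses) the series $\sum_n u(n)$ diverges, its partial sums over $[0,2^{L})$ being $(3/2)^{L}$; the intended extra hypothesis is $f(n)\to 0$, which gives $|f(n)|=O(1/n)$ and makes the boundary term $V(N)f(N)$ vanish, and which holds in every application in the paper ($f(n)=1/n$, $1/(n(n+1))$).
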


\begin{proof} Use \cite[Lemma 1]{AllSon} to get the upper bound 
$|\sum_{0 \leq n < N} u(n)| < C N^{\alpha}$ for some positive constant $C$ 
and some real number $\alpha$ in $(0, 1)$. Then use summation by parts. 
\end{proof}

\section{Main results}

We state in this section some basic identities as well as first applications and examples.
First we define $\delta_k$, a special case of the Kronecker delta:
$$
\delta_k = 
\begin{cases}
1 \ &\mbox{\rm if} \ k = 0, \\
0 \ &\mbox{\rm otherwise}.
\end{cases}
$$

\begin{theorem}\label{first}
Let $B > 1$ be an integer. Let $(u(n))_{n \geq 0}$ be a strongly $B$-multiplicative sequence, 
and let $f$ be a map from the nonnegative integers to the complex numbers, such that 
$(u(n))_{n \geq 0}$ and $f$ satisfy the conditions of Lemma~\ref{conv}. Define the series 
$S_1(k, B, u, f)$, for $k = 0, 1, \ldots, B-1$, by
$$
S(k, B, u, f) := \sum_{n \geq 0} u(n) f(Bn+k).
$$
Then the following linear relations hold:
$$
\sum_{n \geq 0} u(n) f(n) = \sum_{0 \leq k \leq B-1} u(k) S(k, B, u, f) 
$$
and
$$
\sum_{n \geq 0} u(n) \sum_{0 \leq k \leq B-1} f(Bn+k) = \sum_{0 \leq k \leq B-1} S(k, B, u, f).
$$

\medskip

\noindent
In particular, define the series $S_1(k, B, u)$ and $S_2(k, B, u)$, for
$k = 0, 1, \ldots, B - 1$, by
$$
S_1(k, B, u) := \sum_{n \geq \delta_k} \frac{u(n)}{Bn+k}
\ \mbox{and} \
S_2(k, B, u) := \sum_{n \geq \delta_k} \frac{u(n)}{(Bn+k)(Bn+k+1)}\cdot
$$
Then the following linear relations hold:
$$
(B-1) S_1(0, B, u) - \sum_{1 \leq k \leq B-1} u(k) S_1(k, B, u) = 0
$$
and
$$
\sum_{0 \leq k \leq B-1} (B - u(k)) S_2(k, B, u)= B - 1.
$$
\end{theorem}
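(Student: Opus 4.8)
The plan is to prove the two general linear relations by decomposing $\sum_{n\ge 0}u(n)f(n)$ according to the residue class of $n$ modulo $B$ and using strong $B$-multiplicativity, and then to obtain the relations for $S_1$ and $S_2$ by choosing $f$ appropriately.

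The first step is to check that each series $S(k,B,u,f)$ converges. For fixed $k\in\{0,\dots,B-1\}$ the sequence $g_k\colon n\mapsto f(Bn+k)$ satisfies
$$
|g_k(n+1)-g_k(n)|\le\sum_{j=0}^{B-1}\bigl|f(Bn+k+j+1)-f(Bn+k+j)\bigr|={\mathcal O}(n^{-2}),
$$
so the pair $\bigl((u(n))_{n\ge0},g_k\bigr)$ again satisfies the hypotheses of Lemma~\ref{conv} (the sequence $(u(n))_{n\ge0}$ is unchanged), and hence $S(k,B,u,f)=\sum_{n\ge0}u(n)g_k(n)$ converges. For the first relation I would then write $n=Bm+k$, use $u(Bm+k)=u(m)u(k)$, regroup the convergent series $\sum_n u(n)f(n)$ into consecutive blocks of $B$ terms, and swap the inner sum over $m$ with the outer finite sum over $k$:
$$
\sum_{n\ge0}u(n)f(n)=\sum_{m\ge0}\sum_{k=0}^{B-1}u(m)u(k)f(Bm+k)=\sum_{k=0}^{B-1}u(k)\sum_{m\ge0}u(m)f(Bm+k)=\sum_{k=0}^{B-1}u(k)\,S(k,B,u,f);
$$
regrouping a convergent series into blocks is always legitimate, and the final interchange is valid because there are only $B$ inner series, each convergent by the previous remark. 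The second relation is the easier interchange $\sum_{n\ge0}u(n)\sum_k f(Bn+k)=\sum_k\sum_{n\ge0}u(n)f(Bn+k)=\sum_k S(k,B,u,f)$, whose right-hand side again converges by the first step.

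To deduce the ``in particular'' part I would specialise $f$. With $f(0)=0$ and $f(n)=1/n$ for $n\ge1$ (which satisfies $|f(n+1)-f(n)|={\mathcal O}(n^{-2})$) one has $S(k,B,u,f)=S_1(k,B,u)$ for every $k$ and $\sum_{n\ge0}u(n)f(n)=\sum_{n\ge1}u(n)/n=B\,S_1(0,B,u)$, so the first relation combined with $u(0)=1$ gives $(B-1)S_1(0,B,u)-\sum_{1\le k\le B-1}u(k)S_1(k,B,u)=0$. With $f(0)=0$ and $f(n)=1/(n(n+1))$ for $n\ge1$ one has $S(k,B,u,f)=S_2(k,B,u)$ for every $k$; the telescoping identity
$$
\sum_{k=0}^{B-1}\frac{1}{(Bn+k)(Bn+k+1)}=\frac1{Bn}-\frac1{B(n+1)}=\frac{1}{B\,n(n+1)}\qquad(n\ge1)
$$
together with the separate $n=0$ contribution $\sum_{k=1}^{B-1}f(k)=1-\tfrac1B$ turns the second relation into
$$
\frac{B-1}{B}+\frac1B\sum_{n\ge1}\frac{u(n)}{n(n+1)}=\sum_{k=0}^{B-1}S_2(k,B,u),
$$
while the first relation reads $\sum_{n\ge1}u(n)/(n(n+1))=\sum_{k=0}^{B-1}u(k)S_2(k,B,u)$. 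Eliminating the common sum $\sum_{n\ge1}u(n)/(n(n+1))$ between these two identities yields $\sum_{0\le k\le B-1}(B-u(k))S_2(k,B,u)=B-1$.

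The main obstacle is really just the convergence-and-rearrangement bookkeeping in the second paragraph: one has to be sure both that a convergent series may be regrouped into blocks of $B$ consecutive terms and that the $B$ ``column'' series $\sum_m u(m)f(Bm+k)$ each converge, so that the order of summation can be exchanged; both are supplied by Lemma~\ref{conv} applied to the dilated-and-shifted sequences $g_k$. Everything afterwards — the telescoping identity, the appeal to $u(0)=1$, and the elimination step — is routine algebra.
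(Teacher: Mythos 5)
Your proof is correct and follows essentially the same route as the paper: split the sum over residue classes modulo $B$, apply $u(Bn+k)=u(n)u(k)$, and then specialise $f$ to $1/n$ and $1/(n(n+1))$. You merely make explicit the convergence/rearrangement justifications and the telescoping-plus-elimination algebra that the paper leaves to the reader.
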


\begin{proof} It follows from Lemma~\ref{conv} that all the series in the theorem
converge. To prove the first relation, we split $\sum_{n \geq 0} u(n)f(n)$, obtaining
$$
\begin{array}{lll}
\displaystyle\sum_{n \geq 0} u(n)f(n) &=& 
\displaystyle\sum_{0 \leq k \leq B-1} \sum_{n \geq 0} u(Bn+k)f(Bn+k)
= \sum_{0 \leq k \leq B-1} \sum_{n \geq 0} u(n)u(k)f(Bn+k) \\
&=& \displaystyle\sum_{0 \leq k \leq B-1} u(k) \sum_{n \geq 0} u(n) f(Bn+k)
= \sum_{0 \leq k \leq B-1} u(k) S(k, B, u, f).
\end{array}
$$
To prove the second relation, we write
$$
\sum_{n \geq 0} u(n) \sum_{0 \leq k \leq B-1} f(Bn+k)
= \sum_{0 \leq k \leq B-1} \sum_{n \geq 0} u(n) f(Bn+k) 
= \sum_{0 \leq k \leq B-1} S(k, B, u, f). 
$$
To prove the last part of the theorem, we make two choices for $f$.
First we take $f$ defined by $f(n) = 1/n$ for $n \neq 0$ and $f(0) = 0$.
Then we take $f(n) = 1/n(n+1)$ if $n \neq 0$ and $f(0) = 0$.  
\end{proof}

\bigskip

\noindent
{\bf Remark.} \ The formula $S_2(k,B,u) = S_1(k,B,u) - (S_1(k+1,B,u) - \delta_k)$ 
$(0 \leq k \leq B - 2)$ holds. Nevertheless, the last two relations in Theorem~\ref{first} 
are independent, because $S_2(B - 1, B, u)$ cannot be expressed in terms of the 
$S_1(k, B, u)$ for $k = 0, 1, \ldots, B - 1$.

\begin{corollary}\label{cor-gen}
If $(u(n))_{n \geq 0}$ is a strongly $B$-multiplicative sequence satisfying the conditions of
Lemma~\ref{conv}, then
$$
\sum_{n \geq 1} u(n) \sum_{1 \leq k \leq B-1} \left(\frac{1}{Bn} - \frac{u(k)}{Bn+k}\right)
= \sum_{1 \leq k \leq B-1} \frac{u(k)}{k}
$$
and
$$
\sum_{n \geq 1} u(n) \sum_{0 \leq k \leq B-1} \frac{B-u(k)}{(Bn+k)(Bn+k+1)} 
= \sum_{1 \leq k \leq B-1} \frac{u(k)}{k(k+1)}\cdot
$$
\end{corollary}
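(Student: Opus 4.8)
The plan is to derive Corollary~\ref{cor-gen} directly from the two linear relations at the end of Theorem~\ref{first}, by re-expressing the quantities $S_1(k,B,u)$ and $S_2(k,B,u)$ in a form where the divergent parts cancel. First I would observe that the individual series $\sum_{n\geq 1} u(n)/(Bn)$ diverges, so the first relation of the corollary must be read as a single convergent series; accordingly I would start from the identity
$$
(B-1)S_1(0,B,u) - \sum_{1 \leq k \leq B-1} u(k)S_1(k,B,u) = 0
$$
of Theorem~\ref{first}, write $(B-1)S_1(0,B,u) = \sum_{1\leq k \leq B-1} S_1(0,B,u)$, and combine the two sums over $k$ termwise. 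The point is that $S_1(0,B,u) = \sum_{n\geq 1} u(n)/(Bn)$ (note $\delta_0 = 1$, so the sum starts at $n=1$) while $S_1(k,B,u) = \sum_{n\geq 0} u(n)/(Bn+k)$ for $k\geq 1$ (here $\delta_k = 0$). Pulling off the $n=0$ term $u(0)/k = 1/k$ from each $S_1(k,B,u)$ with $k\geq 1$ is exactly what produces the right-hand side $\sum_{1\leq k\leq B-1} u(k)/k$, and what remains is $\sum_{n\geq 1} u(n)\sum_{1\leq k\leq B-1}\bigl(\tfrac{1}{Bn} - \tfrac{u(k)}{Bn+k}\bigr)$.

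For the second identity I would do the analogous bookkeeping starting from
$$
\sum_{0 \leq k \leq B-1}(B - u(k))S_2(k,B,u) = B-1
$$
of Theorem~\ref{first}. Now $S_2(0,B,u) = \sum_{n\geq 1} u(n)/(Bn(Bn+1))$ (since $\delta_0=1$) and $S_2(k,B,u) = \sum_{n\geq 0} u(n)/((Bn+k)(Bn+k+1))$ for $k\geq 1$. For each $k$ with $1\leq k\leq B-1$, split off the $n=0$ term, which contributes $(B-u(k))\cdot\frac{1}{k(k+1)}$. Summing the $n=0$ contributions over $1\leq k\leq B-1$ gives $\sum_{1\leq k\leq B-1}\frac{B-u(k)}{k(k+1)} = B\sum_{1\leq k\leq B-1}\frac{1}{k(k+1)} - \sum_{1\leq k\leq B-1}\frac{u(k)}{k(k+1)}$. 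The first piece telescopes: $\sum_{1\leq k\leq B-1}\frac{1}{k(k+1)} = 1 - \frac{1}{B} = \frac{B-1}{B}$, so $B$ times it is exactly $B-1$, which cancels the right-hand side of the Theorem~\ref{first} relation. What survives is $\sum_{n\geq 1} u(n)\sum_{0\leq k\leq B-1}\frac{B-u(k)}{(Bn+k)(Bn+k+1)} = \sum_{1\leq k\leq B-1}\frac{u(k)}{k(k+1)}$, as claimed.

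Throughout, the convergence of every rearranged series is guaranteed by Lemma~\ref{conv} (applied with $f(n) = 1/n$ and $f(n) = 1/(n(n+1))$ respectively, exactly as in the proof of Theorem~\ref{first}), so the termwise manipulations and the splitting off of the $n=0$ terms are legitimate; I would state this explicitly at the start. The only genuinely delicate point — and the step I expect to need the most care — is the first identity, where neither $\sum u(n)/(Bn)$ nor the right-hand side's individual pieces come from an absolutely convergent source: I must keep the inner sum over $k$ grouped with each fixed $n$ and justify that $\sum_{n\geq 1} u(n)\bigl(\sum_{1\leq k\leq B-1}\tfrac1{Bn} - \sum_{1\leq k\leq B-1}\tfrac{u(k)}{Bn+k}\bigr)$ equals $\sum_{1\leq k\leq B-1} S_1(0,B,u) - \sum_{1\leq k\leq B-1} u(k)S_1(k,B,u)$ only after the $n=0$ terms have been extracted, since otherwise the two sums over $k$ are separately divergent. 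Once that grouping is respected, the argument is a short computation.
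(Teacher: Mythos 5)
Your proof is correct and is exactly the ``substitution and manipulation'' from the last two relations of Theorem~\ref{first} that the paper's one-line proof refers to: writing $(B-1)S_1(0,B,u)$ as $\sum_{1\leq k\leq B-1}S_1(0,B,u)$, peeling off the $n=0$ terms $u(0)/k=1/k$ and $(B-u(k))/(k(k+1))$, and using the telescoping $\sum_{1\leq k\leq B-1}\frac{1}{k(k+1)}=\frac{B-1}{B}$ is precisely the intended computation. One small correction: your opening claim that $\sum_{n\geq 1}u(n)/(Bn)$ diverges is false --- under the hypotheses of Lemma~\ref{conv} (applied with $f(n)=1/n$) this series converges, which is exactly why $S_1(0,B,u)$ is well defined in Theorem~\ref{first}; consequently every individual series in your rearrangement converges on its own and the delicate regrouping you worry about in your last paragraph is not actually needed.
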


\begin{proof} This follows from the last part of Theorem~\ref{first} 
by substitution and manipulation. 
\end{proof}

\bigskip

Recall that the $n$th harmonic number $H_n$ and the $n$th alternating harmonic number 
$H_n^*$ are defined by
$$
H_n := \sum_{1 \leq k \leq n} \frac{1}{k} \ \mbox{\rm and} \ 
H_n^* := \sum_{1 \leq k \leq n} \frac{(-1)^{k-1}}{k}\cdot
$$

\begin{corollary}
If $N_{j,B}(n)$ is the number of occurrences of the digit $j \in \{0,1,\ldots,B-1\}$ in the 
$B$-ary expansion of $n$, then the following summations hold when $j \neq 0$:
$$
\sum_{n \geq 1} (-1)^{N_{j,B}(n)} \left(\frac{2}{Bn+j} + 
\frac{1}{Bn} \sum_{1 \leq k \leq B - 1} \frac{k}{Bn+k}\right) 
= H_{B-1} - \frac{2}{j}
$$
and
$$
\sum_{n \geq 1} (-1)^{N_{j,B}(n)} \left(\frac{B-1}{n(n+1)} + \frac{2B}{(Bn+j)(Bn+j+1)}\right)
= B - 1 - \frac{2B}{j(j+1)}\cdot
$$
\end{corollary}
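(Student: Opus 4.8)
The plan is to specialize Corollary~\ref{cor-gen} to the sequence $u(n) = (-1)^{N_{j,B}(n)}$. First I would check that this sequence is strongly $B$-multiplicative: since $N_{j,B}$ is additive along digit concatenation, i.e. $N_{j,B}(Bn+k) = N_{j,B}(n) + N_{j,B}(k)$ for $k \in \{0,\dots,B-1\}$, we get $u(Bn+k) = u(n)u(k)$, and $u(0)=1$. It is automatic (hence bounded by $1$ in absolute value), and we need $|\sum_{0\le k<B} u(k)| < B$; this holds because $u(j) = -1 \ne 1 = u(k)$ for $k \ne j$ (all other single digits $k$ contain zero copies of $j$), so the sum is $B-2 < B$. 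Thus the hypotheses of Lemma~\ref{conv} are met and Corollary~\ref{cor-gen} applies.

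Next I would substitute the explicit values $u(k)$ for $k = 0,1,\dots,B-1$ into the two displayed identities of Corollary~\ref{cor-gen}. Here $u(k) = 1$ for every $k \ne j$ in $\{1,\dots,B-1\}$, and $u(k) = -1$ for $k = j$. For the first identity: the left-hand inner sum $\sum_{1\le k\le B-1}\left(\frac{1}{Bn} - \frac{u(k)}{Bn+k}\right)$ splits into the $k \ne j$ part, which contributes $\sum_{k\ne j}\left(\frac1{Bn}-\frac1{Bn+k}\right)$, and the $k=j$ term $\frac1{Bn}+\frac1{Bn+j}$. Combining, I would rewrite the full inner sum as $\frac{2}{Bn+j} + \sum_{1\le k\le B-1}\left(\frac1{Bn}-\frac1{Bn+k}\right)$, and then note $\sum_{k=1}^{B-1}\left(\frac1{Bn}-\frac1{Bn+k}\right) = \frac1{Bn}\sum_{k=1}^{B-1}\frac{k}{Bn+k}$ since $\frac1{Bn}-\frac1{Bn+k} = \frac{k}{Bn(Bn+k)}$. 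On the right-hand side, $\sum_{1\le k\le B-1}\frac{u(k)}{k} = \sum_{1\le k\le B-1}\frac1k - \frac2j = H_{B-1} - \frac2j$ (subtracting $\frac1j$ once to remove it and once more for the sign flip). This gives the first claimed formula.

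For the second identity I would argue analogously. The factor $B - u(k)$ equals $B-1$ when $k \ne j$ and $B+1$ when $k = j$. So the left inner sum $\sum_{0\le k\le B-1}\frac{B-u(k)}{(Bn+k)(Bn+k+1)}$ equals $(B-1)\sum_{0\le k\le B-1}\frac1{(Bn+k)(Bn+k+1)} + \frac{2}{(Bn+j)(Bn+j+1)}$. The first telescoping sum over $k$ from $0$ to $B-1$ collapses to $\frac1{Bn}-\frac1{Bn+B} = \frac1{Bn}-\frac1{B(n+1)} = \frac1B\cdot\frac1{n(n+1)}$, so $(B-1)$ times it is $\frac{B-1}{B}\cdot\frac1{n(n+1)}$; I will need to be careful whether the paper intends a $\frac{B-1}{n(n+1)}$ on the left (matching the stated formula) — so I would multiply both sides by $B$, turning the leading term into $\frac{B-1}{n(n+1)}$ and the $j$-term into $\frac{2B}{(Bn+j)(Bn+j+1)}$, and similarly scaling the right-hand side. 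The right side becomes $B\sum_{1\le k\le B-1}\frac{u(k)}{k(k+1)} = B\left(\sum_{1\le k\le B-1}\frac1{k(k+1)} - \frac2{j(j+1)}\right)$, and since $\sum_{k=1}^{B-1}\frac1{k(k+1)} = 1 - \frac1B$, this is $B - 1 - \frac{2B}{j(j+1)}$, as claimed.

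The only genuinely delicate point is matching the normalization constants (the factors of $B$) between Corollary~\ref{cor-gen} and the target statement, and keeping track of the "$-\frac2j$" versus "$-\frac1j$" bookkeeping caused by the sign flip at $k=j$ contributing a difference of $2$ rather than just removing a term. Everything else is elementary partial-fraction and telescoping manipulation. I would present it compactly: verify the hypotheses, state that we plug $u(k)=1$ ($k\ne j$), $u(k)=-1$ ($k=j$) into Corollary~\ref{cor-gen}, and then display the short rearrangements leading to each of the two formulas.
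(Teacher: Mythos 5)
Your proposal is correct and follows exactly the paper's route: apply Corollary~\ref{cor-gen} to $u(n)=(-1)^{N_{j,B}(n)}$, use $N_{j,B}(k)=\delta_{k,j}$ for $0\le k<B$ (so $u(j)=-1$ and $u(k)=1$ otherwise), and simplify by partial fractions and telescoping; the paper merely states this in two sentences, while you supply the bookkeeping (the ``$-2/j$'' from the sign flip, the factor-of-$B$ normalization) explicitly and correctly.
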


\begin{proof} It is not hard to see that, if $j \neq 0$, we can apply the last part of 
Theorem~\ref{first} to the sequence $u(n) := (-1)^{N_{j,B}(n)}$. Using Corollary~\ref{cor-gen} 
and the fact that $N_{j,B}(k) = \delta_{k,j}$ when $0 \leq k < B$, the result follows. 
\end{proof}

\begin{example}\label{ex7} Taking $B = 2$ and $j = 1$, we get
$$
\sum_{n \geq 1} (-1)^{N_{1,2}(n)} \frac{4n+1}{2n(2n+1)} = -1
$$
and
$$
\sum_{n \geq 1} (-1)^{N_{1,2}(n)} \frac{4n+1}{2n(2n+1)(2n+2)} = -\frac{1}{4}\cdot
$$
Subtracting the second equation from the first, we multiply by $4$ and obtain
$$
\sum_{n \geq 1} (-1)^{N_{1,2}(n)} \frac{4n+1}{n(n+1)} = -3.
$$
With $B = 3$ and $j = 1$ we get
$$
\sum_{n \geq 1} (-1)^{N_{1,3}(n)} \frac{18n^2 + 21n + 4}{3n(3n+1)(3n+2)} = - \frac{1}{2}
$$
and
$$
\sum_{n \geq 1} (-1)^{N_{1,3}(n)} \frac{6n^2 + 6n + 1}{3n(3n+1)(3n+2)(3n+3)} = - \frac{1}{36}\cdot
$$
\end{example}

\begin{corollary}
If $s_B(n)$ is the sum of the $B$-ary digits of $n$, then
$$
\sum_{n \geq 1} (-1)^{s_B(n)} \sum_{1 \leq k \leq B-1} \left(\frac{1}{Bn} - \frac{(-1)^k}{Bn+k}\right)
= - H_{B-1}^*
$$
and
$$
\sum_{n \geq 1} (-1)^{s_B(n)} \sum_{0 \leq k \leq B-1} \frac{B - (-1)^k}{(Bn+k)(Bn+k+1)}
= 1 + \frac{(-1)^B}{B} - 2 H_{B-1}^*.
$$
\end{corollary}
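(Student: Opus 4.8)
The plan is to specialize Corollary~\ref{cor-gen} to the sequence $u(n) := (-1)^{s_B(n)}$. First I would note that this sequence is strongly $B$-multiplicative: since appending the digit $k \in \{0,1,\dots,B-1\}$ to the $B$-ary expansion of $n$ gives $Bn+k$, we have $s_B(Bn+k) = s_B(n) + k$, hence $u(Bn+k) = (-1)^{s_B(n)}(-1)^k = u(n)u(k)$, and $u(0)=1$. Next I would check the hypotheses of Lemma~\ref{conv}: clearly $|u(n)| = 1$ for all $n$, and because a one-digit integer $k \in [0,B-1]$ satisfies $s_B(k) = k$, we have $u(k) = (-1)^k$ on that range, so $\sum_{0 \le k < B} u(k)$ equals $0$ when $B$ is even and $1$ when $B$ is odd; in either case its modulus is $< B$. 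Thus Corollary~\ref{cor-gen} applies to $u$.

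Now substitute $u(k) = (-1)^k$ for $0 \le k \le B-1$ into the two identities of Corollary~\ref{cor-gen}. The left-hand sides become precisely the left-hand sides in the statement. For the first identity the right-hand side becomes $\sum_{1 \le k \le B-1} \frac{(-1)^k}{k} = -\sum_{1 \le k \le B-1} \frac{(-1)^{k-1}}{k} = -H_{B-1}^*$, which is the asserted value. For the second identity the right-hand side becomes $\sum_{1 \le k \le B-1} \frac{(-1)^k}{k(k+1)}$, and the only real computation is to bring this into closed form. I would use $\frac{1}{k(k+1)} = \frac1k - \frac1{k+1}$ and split: the first piece is again $-H_{B-1}^*$, while in the second piece the index shift $j = k+1$ gives $\sum_{2 \le j \le B} \frac{(-1)^{j-1}}{j} = H_B^* - 1$. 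Combining and using $H_B^* - H_{B-1}^* = \frac{(-1)^{B-1}}{B}$ yields $\sum_{1 \le k \le B-1} \frac{(-1)^k}{k(k+1)} = 1 - 2H_{B-1}^* - \frac{(-1)^{B-1}}{B} = 1 + \frac{(-1)^B}{B} - 2H_{B-1}^*$, as claimed.

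There is no genuine obstacle here; the proof is a direct specialization. The only points demanding care are the bookkeeping for the alternating sum $\sum_{1 \le k \le B-1} \frac{(-1)^k}{k(k+1)}$ — keeping the shifted summation range straight and correctly invoking $H_B^* = H_{B-1}^* + \frac{(-1)^{B-1}}{B}$ — and the (trivial but necessary) even/odd case split when verifying the hypothesis $\bigl|\sum_{0 \le k < B} u(k)\bigr| < B$ of Lemma~\ref{conv}.
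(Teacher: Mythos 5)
Your proof is correct and follows the same route as the paper: specialize Corollary~\ref{cor-gen} to $u(n)=(-1)^{s_B(n)}$, use $u(k)=(-1)^k$ for $0\le k<B$, and reduce the right-hand sides to the stated closed forms. Your explicit telescoping computation of $\sum_{1\le k\le B-1}\frac{(-1)^k}{k(k+1)}$ and your direct verification that $\bigl|\sum_{0\le k<B}u(k)\bigr|\in\{0,1\}<B$ simply fill in details the paper leaves to the reader.
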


\begin{proof} Setting $u(n):= (-1)^{s_B(n)}$, it is not hard to see that $u(2n +1) = - u(2n)$ for all 
$n \geq 0$. (Hint: look at the cases $B$ even and $B$ odd separately.) It follows that 
$(u(n))_{n \geq 0}$ satisfies the conditions of Lemma~\ref{conv}. Noting that $u(k) = (-1)^k$ 
when $0 \leq k < B$, the result follows from Corollary~\ref{cor-gen}.
\end{proof}

\begin{example}
Taking $B = 2$ or $3$ gives the same pair of series as those with that value of $B$ in 
Example~\ref{ex7}, since $s_2(n) = N_{1,2}(n)$ and $s_3(n) = N_{1,3}(n) + 2 N_{2,3}(n)$. 
(We can also replace $s_3(n)$ with $n$, as $(-1)^{s_B(n)} = (-1)^n$ when $B$ is odd.) 
With $B = 4$ we get
$$
\sum_{n \geq 1} (-1)^{s_4(n)} \frac{128n^3 + 176n^2 + 76n + 9}{4n(4n+1)(4n+2)(4n+3)} = 
- \frac{5}{12}
$$
and
$$
\sum_{n \geq 1} (-1)^{s_4(n)} \frac{128n^3 + 184n^2 + 80n + 9}{4n(4n+1)(4n+2)(4n+3)(4n+4)} =
- \frac{5}{12}\cdot
$$
\end{example}

\section{More examples}

Using Corollary~\ref{cor-gen} with sequences $(u(n))_{n \geq 0}$ taking complex
values yields other examples of sums of series.

\begin{example}~\label{complex1}
 We may let $u(n) := i^{s_2(n)}$ in Corollary~\ref{cor-gen}. This gives the two summations
$$
\sum_{n \geq 1} \left(\frac{i^{s_2(n)}}{2n} - \frac{i^{s_2(n)+1}}{2n+1}\right) = i 
= \sum_{n \geq 1} \frac{i^{s_2(n)}(3n+1) - i^{s_2(n)+1}n}{n(n+1)(2n+1)},
$$
and by taking the imaginary and real parts we obtain the following result:

\medskip

{\it If $\chi$ is the non-principal Dirichlet character modulo $4$, defined by
$$
\chi(n) :=
\begin{cases}
+1 \ &\mbox{\rm if} \ n \equiv 1 \bmod 4, \\
-1 \ &\mbox{\rm if} \ n \equiv 3 \bmod 4, \\
\ \ 0  \ &\mbox{\rm otherwise},
\end{cases}
$$
then
$$ 
\sum_{n \geq 1} \left(\frac{\chi(s_2(n))}{2n} - \frac{\chi(s_2(n)+1)}{2n+1}\right) = 1
= \sum_{n \geq 1} \frac{(3n+1)\chi(s_2(n)) - n \chi(s_2(n)+1)}{n(n+1)(2n+1)}
$$
and
$$ 
\sum_{n \geq 1} \left(\frac{\chi(s_2(n)+1)}{2n} - \frac{\chi(s_2(n)+2)}{2n+1}\right) = 0
= \sum_{n \geq 1} \frac{(3n+1)\chi(s_2(n)+1) - n \chi(s_2(n)+2)}{n(n+1)(2n+1)}\cdot
$$
}
\end{example}

\begin{example}
Generalizing Example~\ref{complex1} by replacing $i^{s_2(n)}$ with 
$e^{2i \pi s_2(n)/d}$, for integer $d \geq 2$, is straightforward, yielding 
the following summations (Example~\ref{complex1} is another formulation for the
case $d=4$):
$$ 
\sum_{n \geq 1} \left(\frac{\sin\frac{2\pi s_2(n)}{d}}{2n} 
- \frac{\sin\frac{2\pi (s_2(n)+1)}{d}}{2n+1}\right) 
= \sin\frac{2\pi}{d} 
= \sum_{n \geq 1} \frac{(3n+1)\sin\frac{2\pi s_2(n)}{d} - n \sin\frac{2\pi (s_2(n)+1)}{d}}
{n(n+1)(2n+1)}
$$ 
and 
$$
\sum_{n \geq 1} \left(\frac{\cos\frac{2\pi s_2(n)}{d}}{2n} 
- \frac{\cos\frac{2\pi (s_2(n)+1)}{d}}{2n+1}\right) 
= \cos\frac{2\pi}{d} 
= \sum_{n \geq 1} \frac{(3n+1)\cos\frac{2\pi s_2(n)}{d} - n \cos\frac{2\pi (s_2(n)+1)}{d}}
{n(n+1)(2n+1)}\cdot
$$ 
\end{example}

\section{The paperfolding and Golay-Shapiro-Rudin sequences}

The results above involve sums $\sum u(n) R(n)$ where $(u(n))_{n \geq 0}$ is a strongly 
$B$-multiplicative sequence, which, in all of our examples except 
Example~\ref{example-sB} with alpha not a root of unity, happens to take only finitely
many values. This implies that $(u(n))_{n \geq 0}$ is {\it $B$-automatic} (see the 
introduction). One can then ask about more general sums $\sum u(n) R(n)$ where the sequence 
$(u(n))_{n \geq 0}$ is $B$-automatic. We give two cases where such series can be summed.

\begin{theorem}\label{paperfold}
Let $(v(n))_{n \geq 0}$ be the {\em regular paperfolding sequence}. Its first few terms are given by
(replacing $+1$ by $+$ and $-1$ by $-$)
$$
(v(n))_{n \geq 0} = + \ + \ - \ + \ + \ - \ - \ \ldots;
$$ 
it can be defined by: $v(2n)=(-1)^n$ and $v(2n+1) = v(n)$ for all $n \geq 0$. 
Then, for all integers $d \geq 0$, we have the relation
$$
\sum_{n \geq 0} \frac{v(n)}{(n+1)^{2d+1}} = \frac{\pi^{2d+1} |E_{2d}|}{(2^{2d+2}-2)(2d)!}
$$
where the $E_{2d}$'s are the Euler numbers defined by:
$$
\frac{1}{\cosh t} = \sum_{n \geq 0} \frac{E_{2n}}{(2n)!} t^{2n} \ \mbox{\rm for} \ |t| < \frac{\pi}{2}\cdot
$$
\end{theorem}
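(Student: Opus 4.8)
The plan is to exploit the self-similarity of $(v(n))_{n\geq 0}$ encoded in $v(2n)=(-1)^n$, $v(2n+1)=v(n)$ in order to turn the sum into a linear equation it satisfies, and then to identify the "inhomogeneous term'' with a value of the Dirichlet beta function. Write $S_d:=\sum_{n\geq 0} v(n)/(n+1)^{2d+1}$. First I would settle convergence: for $d\geq 1$ the series is absolutely convergent, and for $d=0$ one uses that the partial sums $\sum_{n<N}v(n)$ are $O(\log N)$ --- which follows at once from the recursion, since splitting by parity gives $\sum_{n<2N}v(n)=\sum_{m<N}(-1)^m+\sum_{m<N}v(m)$, hence $T(2N)=T(N)+O(1)$ --- and then Abel summation against $1/(n+1)$ gives convergence of $S_0$. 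With convergence established, the rearrangements below are legitimate.

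Next I would split $S_d$ according to the parity of the summation index. The terms with $n=2m$ contribute $\sum_{m\geq 0}(-1)^m/(2m+1)^{2d+1}$, which is exactly $\beta(2d+1)$, the Dirichlet beta function at the odd argument $2d+1$. The terms with $n=2m+1$ contribute, using $v(2m+1)=v(m)$,
$$
\sum_{m\geq 0}\frac{v(m)}{(2m+2)^{2d+1}}=\frac{1}{2^{2d+1}}\sum_{m\geq 0}\frac{v(m)}{(m+1)^{2d+1}}=\frac{1}{2^{2d+1}}\,S_d .
$$
This produces the identity $S_d=\beta(2d+1)+2^{-(2d+1)}S_d$, and solving gives $S_d=\dfrac{2^{2d+1}}{2^{2d+1}-1}\,\beta(2d+1)$.

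Finally I would substitute the classical closed form
$$
\beta(2d+1)=\frac{(-1)^d E_{2d}\,\pi^{2d+1}}{2^{2d+2}\,(2d)!},
$$
which comes from the partial-fraction (Fourier) expansion of $\sec$, equivalently from the generating function for the Euler numbers stated in the theorem. Since $E_{2d}$ has sign $(-1)^d$ (so $(-1)^dE_{2d}=|E_{2d}|$), and since $\dfrac{2^{2d+1}}{2^{2d+1}-1}\cdot\dfrac{1}{2^{2d+2}}=\dfrac{1}{2^{2d+2}-2}$, the two expressions combine to give precisely $\dfrac{\pi^{2d+1}|E_{2d}|}{(2^{2d+2}-2)(2d)!}$, as claimed. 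I expect the only genuinely delicate point to be the justification of convergence and of the parity split in the endpoint case $d=0$; the rest is a short computation plus the known evaluation of $\beta$ at odd integers.
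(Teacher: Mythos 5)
Your proof is correct and follows essentially the same route as the paper: split the sum by parity using the recursion to get $S_d=\frac{2^{2d+1}}{2^{2d+1}-1}\beta(2d+1)$, then invoke the classical evaluation of $\beta$ at odd integers via the Euler numbers. The only (minor) difference is that you derive the $O(\log N)$ bound on the partial sums directly from the recursion rather than citing it, which makes the convergence step self-contained.
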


\begin{proof} First note that the series $\sum_{n \geq 0} \frac{v(n)}{(n+1)^s}$ converges 
for $\Re(s) > 0$: use the inequality $|\sum_{n < N} v(n)| = O(\log N)$ (see, e.g., 
\cite[Exercise~28, p.~206]{AS}) and summation by parts; note that the sequence $(R_n)_{n \geq 1}$ 
in \cite[Exercise~28, p.~206]{AS} is equal to the sequence $(v(n))_{n \geq 0}$ here. 
Now, Exercise~27 in \cite[p.\ 205--206]{AS} asks to prove, for all complex numbers $s$ with 
$\Re(s) > 0$, the equality (again with slightly different notation)
$$
\sum_{n \geq 0} \frac{v(n)}{(n+1)^s} 
= \frac{2^s}{2^s - 1} \sum_{n \geq 0} \frac{(-1)^n}{(2n+1)^s}\cdot
$$ 
This can be easily done by splitting the sum on the left into even and odd indexes. 
Recalling that the Dirichlet beta function is defined by 
$\beta(s) = \sum_{n \geq 0} \frac{(-1)^n}{(2n+1)^s}$ for $\Re(s) > 0$, we thus have, 
for any nonnegative integer $d$, 
$$
\sum_{n \geq 0} \frac{v(n)}{(n+1)^{2d+1}}  = \frac{2^{2d+1}}{2^{2d+1} - 1} \beta(2d+1).
$$ 
But, when $s$ is an odd integer, the value of $\beta(s)$ can be expressed as a rational multiple
of $\pi$ (see, e.g., \cite[23.2.22, p.\ 807]{Abr-Ste}):
$$
\beta(2d+1) = \frac{(\pi/2)^{2d+1}}{2 (2d)!} |E_{2d}|.  \qedhere
$$
\end{proof}

\begin{example}
Taking $d=0$ in Theorem~\ref{paperfold} yields a result due to F. von Haeseler
(see \cite[Exercise~27, p.\ 205--206]{AS})
$$
\sum_{n \geq 0} \frac{v(n)}{n+1} = \frac{\pi}{2}\cdot
$$
\end{example}

\bigskip

\noindent
{\bf Remark.} \
The paperfolding sequence happens to be related to the Jacobi-Kronecker symbol
(see, e.g., \cite[p.~27--28]{Cohen}). Namely, as noted in \cite{oeis} for the
sequence A034947, the following identity holds
$$
v(n-1) = \left(\frac{-1}{n}\right) \ \mbox{\rm for} \ n \geq 1
$$
(denoting $R(n) := v(n-1)$ for $n \geq 1$, this is an easy consequence of the relations
$R(2n+1) = (-1)^n$ for all $n \geq 0$ and $R(2n) = R(n)$ for all $n \geq 1$).

\bigskip

The second result we give in this section involves the Golay-Shapiro-Rudin sequence.

\begin{theorem}\label{shap}
Let $(r(n))_{n \geq 0}$ be the $\pm 1$ {\em Golay-Shapiro-Rudin sequence}. This sequence can be defined 
by $r(n) = (-1)^{a(n)}$, where $a(n)$ is the number of possibly overlapping occurrences of 
the block $11$ in the binary expansion of $n$, so that (replacing $+1$ by $+$ and $-1$ by $-1$)
$$
(r(n))_{n \geq 0} = + \ + \ + \ - \ + \ + \ - \ + \ \ldots;
$$
alternatively it can be defined by
$$
r(0) = 1, \ \mbox{\rm and} \ r(2n) = r(n), \ r(2n+1) = (-1)^n r(n) \ \mbox{\rm for} \ n \geq 0.
$$
Let $R(n)$ be a function from the nonnegative integers to the complex numbers, 
such that $|R(n+1) - R(n)| = {\mathcal O}(n^{-2})$. Then we have the relation
$$
\sum_{n \geq 1} r(n) (R(n) - R(2n) + R(2n+1) - 2R(4n+1)) = R(1).
$$
\end{theorem}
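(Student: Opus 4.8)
The plan is to exploit the recursion defining $(r(n))_{n\geq0}$ in order to split the series $\sum_{n\geq1} r(n)R(n)$ according to the parity of $n$, exactly as in the proofs of Theorem~\ref{first} and Theorem~\ref{paperfold}. First I would observe that, by the inequality $|\sum_{n<N} r(n)| = O(\sqrt{N})$ (a standard fact about the Golay-Shapiro-Rudin sequence; its partial sums are $O(\sqrt N)$, see e.g.\ \cite{AS}) together with the hypothesis $|R(n+1)-R(n)| = \mathcal{O}(n^{-2})$ and summation by parts, the series $\sum_{n\geq1} r(n)R(n)$ converges; moreover the same bound shows all the rearranged/regrouped series below converge absolutely after the telescoping is set up, so the manipulations are legitimate.

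Next I would write, using $r(2n)=r(n)$ and $r(2n+1)=(-1)^n r(n)$,
$$
\sum_{n\geq1} r(n)R(n) = \sum_{n\geq1} r(2n)R(2n) + \sum_{n\geq0} r(2n+1)R(2n+1)
= \sum_{n\geq1} r(n)R(2n) + \sum_{n\geq0} (-1)^n r(n)R(2n+1).
$$
In the second sum the $n=0$ term is $r(0)R(1)=R(1)$, and for $n\geq1$ I would split again according to the parity of $n$, applying the defining relations once more to $r(2n+1)$ with $n$ replaced by $2m$ and $2m+1$: $(-1)^{2m} r(2m) R(4m+1) = r(m)R(4m+1)$ and $(-1)^{2m+1} r(2m+1) R(4m+3) = -(-1)^m r(m)R(4m+3)$. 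Rather than carry this to the $R(4n+3)$ terms, the cleaner route — matching the stated identity, which only involves $R(n),R(2n),R(2n+1),R(4n+1)$ — is to recombine: from the first display, subtracting $\sum r(n)R(2n)$ from both sides gives
$$
\sum_{n\geq1} r(n)\bigl(R(n)-R(2n)\bigr) = R(1) + \sum_{n\geq1} (-1)^n r(n) R(2n+1),
$$
and now I would treat $\sum_{n\geq1}(-1)^n r(n)R(2n+1)$ by writing $(-1)^n r(n) = r(2n+1)$ and then using $r(2n+1) = r(n) - \bigl(r(n)-r(2n+1)\bigr)$ together with the even/odd split on the index $n$ inside $\sum r(n)R(2n+1)$, i.e.\ $\sum_{n\geq1} r(n)R(2n+1) = \sum_{m\geq1} r(m)R(4m+1) + \sum_{m\geq0} (-1)^m r(m) R(4m+3)$. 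The term $(-1)^m r(m)R(4m+3) = r(2m+1)R(4m+3)$ is again of GSR-type and telescopes against a shifted copy; pushing this comparison one level further and using the convergence established in the first step, all the ``$R(4m+3)$'' contributions cancel and one is left precisely with $\sum_{n\geq1} r(n)\bigl(R(n)-R(2n)+R(2n+1)-2R(4n+1)\bigr) = R(1)$.

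The main obstacle is bookkeeping of convergence across the repeated regroupings: each split is valid termwise, but to rearrange freely (in particular to cancel the two families of $R(4m+3)$ terms against each other) one needs that the relevant tails go to zero, which is where the $O(\sqrt N)$ partial-sum bound for $(r(n))$ and the $\mathcal{O}(n^{-2})$ hypothesis on $R$ are used; I would state this once at the outset and then perform all the algebraic splits formally. A secondary, purely notational, difficulty is arranging the three iterated parity-splits so that the coefficient of $R(4n+1)$ comes out to exactly $-2$ and nothing of higher level survives — this is forced by the identity $r(2n)+(-1)^{n}\cdot\text{(correction)}$ at the second level producing two copies of the ``even-even'' branch, and I would verify the coefficient by tracking that branch alone rather than expanding everything.
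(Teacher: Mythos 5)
Your proposal is correct and follows essentially the same route as the paper: convergence via the $O(\sqrt{N})$ bound on the partial sums of $(r(n))$ plus summation by parts, then two successive parity splits, with the second split of $\sum r(n)R(2n+1)$ used to cancel the family of $R(4m+3)$ terms and force the coefficient $-2$ on $R(4n+1)$. One small correction: the regrouped subseries are not absolutely convergent in general (take $R(n)=1/n$), but each of them converges conditionally by the same partial-summation argument, since the sequences $r(2n)$, $r(2n+1)=(-1)^n r(n)$, etc.\ all have $O(\sqrt{N})$ partial sums, and that is all the splitting and recombination steps require.
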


\begin{proof} It is well known that $|\sum_{n < N} r(n)| < K \sqrt{n}$ for some positive constant 
$K$ (actually more is known; see, e.g., \cite[Theorem~3.3.2, p.\ 79]{AS} and the historical 
comments given in \cite[3.3, p.\ 121]{AS}). Thus, by summation by parts, the series 
$\sum_{n \geq 0} r(n) R(n)$ is convergent. Now we write
$$
\begin{array}{lll}
\displaystyle \sum_{n \geq 0} r(n) R(n) &=& 
\displaystyle \sum_{n \geq 0} r(2n) R(2n) + \sum_{n \geq 0} r(2n+1) R(2n+1) \\
&=& \displaystyle \sum_{n \geq 0} r(n) R(2n) + \sum_{n \geq 0} (-1)^n r(n) R(2n+1) \\
&=& \displaystyle \sum_{n \geq 0} r(n) R(2n) + \sum_{n \geq 0} r(2n) R(4n+1) 
- \sum_{n \geq 0} r(2n+1) R(4n+3) \\
&=& \displaystyle \sum_{n \geq 0} r(n) (R(2n) + R(4n+1)) -  \sum_{n \geq 0} r(2n+1) R(4n+3). \\ 
\end{array}
$$
Hence
$$
\begin{array}{lll}
\displaystyle \sum_{n \geq 0} r(n) (R(n) - R(2n) - R(4n+1)) &=&
-  \displaystyle \sum_{n \geq 0} r(2n+1) R(4n+3) \\
&=&
\displaystyle - (\sum_{n \geq 0} r(n) R(2n+1) - \sum_{n \geq 0} r(2n) R(4n+1)) \\
&=& \displaystyle - \sum_{n \geq 0} r(n) R(2n+1) + \sum_{n \geq 0} r(n) R(4n+1) \\
\end{array}
$$
where the penultimate equality is obtained by splitting the sum $\sum_{n \geq 0} r(n) R(2n+1)$
into even and odd indices. Thus, finally
$$
\sum_{n \geq 0} r(n) (R(n) - R(2n) + R(2n+1) - 2R(4n+1)) = 0,
$$
hence
$$
\sum_{n \geq 1} r(n) (R(n) - R(2n) + R(2n+1) - 2R(4n+1)) = R(1).  \qedhere
$$
\end{proof}

\begin{example}
Taking $R(n) = 1/n$ if $n \neq 0$ and $R(0) =1$ in Theorem~\ref{shap} above yields
$$
\sum_{n \geq 1} r(n) \frac{8n^2+4n+1}{2n(2n+1)(4n+1)} = 1.
$$
\end{example}

\begin{example}
Taking $R$ defined by $R(n) = \log n - \log(n+1)$ for $n \neq 0$ and $R(0) = 0$ 
in Theorem~\ref{shap} above yields
$$
\sum_{n \geq 1} r(n) \log\frac{(2n+1)^4}{(n+1)^2(4n+1)^2} = - \log 2.
$$
Hence
$$
\sum_{n \geq 0} r(n) \log\frac{(2n+1)^2}{(n+1)(4n+1)} = - \frac{1}{2} \log 2.
$$
After exponentiating we obtain:
$$
\prod_{n \geq 0} \left(\frac{(2n+1)^2}{(n+1)(4n+1)}\right)^{r(n)} = \frac{1}{\sqrt{2}}
$$
thus recovering the value of an infinite product obtained in \cite[Theorem 2, p.\ 148]{ACMS} 
(also see \cite{AllSha-jlms}).
\end{example}

\end{document}